\pgfplotsset{compat=1.15}
\tikzset{
    >=stealth,
    every picture/.style={thick},
    graphs/every graph/.style={empty nodes},
}
\tikzstyle{vertex}=[
\tikzstyle{printersafe}=[decoration={snake,amplitude=0pt}]
\newcommand{\supp}{\operatorname{supp}}
\newcommand{\pp}{\mathbb{P}}
\newcommand{\qq}{\mathbb{Q}}
\newcommand{\rr}{\mathbb{R}}
\newcommand{\cc}{\mathbb{C}}
\definecolor{uuuuuu}{rgb}{0.26666666666666666,0.26666666666666666,0.26666666666666666}
  \newtheorem{theorem}{Theorem}[section]
  \newtheorem{lemma}[theorem]{Lemma}
  \newtheorem{definition}[theorem]{Definition}
  \newtheorem{example}[theorem]{Example}
\newtheorem{remark}[theorem]{Remark}
\theoremstyle{remark}
\numberwithin{equation}{section}
\keywords{K\"ahler varieties, Bott-Chern cohomology, Calabi--Yau, complexity, toric, K3 surfaces.}
\subjclass[2020]{Primary: 32J27, 14E30; Secondary: 14M25, 14J28.}
\begin{document}

\title[Bott-Chern complexity of K\"ahler pairs]{Bott-Chern complexity of K\"ahler pairs}

\author[C.~Hacon]{Christopher Hacon}
\address{Department of Mathematics, University of Utah, Salt Lake City, UT 84112,
USA}
\email{hacon@math.utah.edu}

\author[J.~Moraga]{Joaqu\'in Moraga}
\address{UCLA Mathematics Department, Box 951555, Los Angeles, CA 90095-1555, USA
}
\email{jmoraga@math.ucla.edu}

\author[J.I.~Y\'a\~nez]{Jos\'e Ignacio Y\'a\~nez}
\address{UCLA Mathematics Department, Box 951555, Los Angeles, CA 90095-1555, USA
}
\email{yanez@math.ucla.edu}
\thanks{The first author was partially supported by  NSF research grant   DMS-2301374 and
by a grant from the Simons Foundation SFI-MPS-MOV-00006719-07. 
The second author was partially supported by NSF research grant DMS-2443425.}

\begin{abstract}
We introduce the Bott-Chern complexity 
of a compact K\"ahler pair $(X,B)$.
This invariant compares $\dim(X)$, $\dim H^{1,1}_{\rm BC}(X)$ and the sum of the coefficients of $B$.
When $(X,B)$ is Calabi--Yau, we show that
its Bott-Chern complexity is non-negative.
We prove that the Bott-Chern complexity of a Calabi--Yau compact K\"ahler pair
$(X,B)$
is at least three whenever $X$ is not projective.
Furthermore, we show this value is optimal and is achieved by certain singular non-projective K3 surfaces.
\end{abstract}
\maketitle

\setcounter{tocdepth}{1}
\tableofcontents

\section{Introduction}

In the recent years, there has been substantial progress in our understanding of the 
birational geometry of K\"ahler varieties. 
Many results of the projective Minimal Model Program have been generalized to the K\"ahler setting. 
For example, there has been exciting progress in the threefold K\"ahler MMP \cite{HP15,HP16,DH20,DHY23,DH24}, the threefold Abundance Conjecture \cite{CHP16,DO24,DO23}, the Canonical Bundle Formula \cite{HP24}, and the relative setting for projective morphisms \cite{Fuj22,LM22,DHP24}.

From the perspective of projective algebraic geometry,
the complexity is an invariant that allows us to measure how far a 
variety is from being toric. 
The complexity of a $\qq$-factorial projective log pair $(X,B)$ measures the difference between $\dim(X)+\rho(X)$
and the sum of the coefficients of $B$.
More precisely, we set 
\[
c(X,B):=\dim X + \rho(X)-|B|, 
\]
where $|B|$ stands for the sum of the coefficients of $B$.
Whenever $(X,B)$ is a projective Calabi--Yau pair, i.e., satisfies $K_X+B\sim_\qq0 $ and has log canonical singularities, the complexity
is a non-negative number~\cite{BMSZ18}.
Furthermore, if $c(X,B)<1$ then $X$ is a projective toric variety.
In~\cite{ELY25}, the authors show that $X$ is of cluster type, whenever 
$c(X,B)=1$. Cluster type varieties are a special kind of projective varieties that are compactifications of algebraic tori (see, e.g.,~\cite[Definition 2.26]{EFM24}). 
In~\cite{MY25}, the authors study Calabi--Yau pairs of complexity two
and develop a method to determine whether they are of cluster type. 
In the projective setting, the complexity has also been connected with the topology of dual complexes of Calabi--Yau pairs~\cite{MM24}
and with the existence of birational conic fibrations~\cite{Mor24a}. 

An important question for K\"ahler varieties, is to have criteria to decide whether a K\"ahler variety $X$ is projective. Some examples of such results are Kodaira's criterion, where a K\"ahler manifold $X$ is projective if $H^0(X,\Omega^2_X) = 0$, or Moishezon's criterion, in which a normal K\"ahler variety $X$ is projective if $X$ is Moishezon, meaning that it admits a big line bundle and has rational singularities \cite{Nam02}.
In this article, we make a connection between the complexity for K\"ahler pairs and the projectivity of varieties. 
We introduce the {\em Bott-Chern complexity};
given a compact complex K\"ahler pair $(X,B)$, 
its Bott-Chern complexity is the dimension of $X$ plus
the first Bott-Chern cohomology of $X$ minus
the sum of the coefficients of $B$ (see Definition~\ref{def:BC-comp}).
Our first theorem shows that 
the Bott-Chern complexity of non-projective pairs $(X,B)$ is at least three whenever $-(K_X+B)$ is a nef divisor. 

\begin{theorem}\label{introhm:BCC}
Let $X$ be a compact K\"ahler variety
and $(X,B)$ be a log canonical pair.
Assume that $X$ is strongly $\qq$-factorial and
that $-(K_X+B)$ is nef. 
Then, the following statements hold. 
\begin{enumerate}
\item If
$
\dim X + h^{1,1}_{\rm BC}(X) -|B| < 3,
$
then $X$ is a projective variety.
\item If 
$\dim X + h^{1,1}_{\rm BC}(X)-|B|=3 $
and $X$ is non-projective, then the base of the MRC fibration of $X$ 
is a singular non-projective K3 surface $W$
of Picard rank zero and $h^{1,1}_{\rm BC}(W)=1$.
\end{enumerate}
\end{theorem}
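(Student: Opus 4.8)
The plan is to analyse $X$ through its maximal rationally connected (MRC) fibration $\psi\colon X\drar W$, to establish that the Bott--Chern complexity of $(X,B)$ is at least $\dim W+h^{1,1}_{\rm BC}(W)$, where $W$ carries a natural Calabi--Yau structure, and then to read off the two statements by inspecting low--dimensional Calabi--Yau varieties. We may assume $-(K_X+B)$ is not big, for otherwise $X$ is Moishezon and hence projective. Passing to a suitable birational model we may take $\psi$ to be a projective morphism; its general fibre $F$ is rationally connected, and $(F,B_F)$, where $B_F:=B|_F$, is a projective log canonical pair with $-(K_F+B_F)$ nef.

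The first, and most substantial, point is that $W$ is itself a Calabi--Yau variety: it has at worst klt singularities and $K_W\equiv 0$. On the one hand $W$ is not uniruled, so $K_W$ is pseudoeffective. On the other hand, combining the Kähler canonical bundle formula of \cite{HP24} with the nefness of $-(K_X+B)$ and pushing forward along $\psi$ shows that $-K_W$ is pseudoeffective. Hence $K_W\equiv 0$; moreover the induced discriminant on $W$ vanishes, so we may assume that $B$ has no $\psi$--vertical components.

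Next I would prove the inequality
\[
\dim X+h^{1,1}_{\rm BC}(X)-|B|\ \ge\ \dim W+h^{1,1}_{\rm BC}(W).
\]
Write $\dim X=\dim W+\dim F$. The estimate rests on three inputs: a Leray--type argument for the Bott--Chern cohomology of the Kähler fibration $\psi$, giving $h^{1,1}_{\rm BC}(X)\ge h^{1,1}_{\rm BC}(W)+\rho(F)$ (classes pulled back from $W$ being linearly independent from classes that restrict nontrivially to fibres); the projective complexity inequality of \cite{BMSZ18}, applied to $(F,B_F)$ after a crepant modification turning $-(K_F+B_F)$ into the trivial class, giving $\dim F+\rho(F)\ge |B_F|$; and the elementary observation that, since every component of $B$ dominates $W$, it meets $F$ in a nonzero effective divisor, so $|B_F|\ge|B|$. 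Combining these yields the display. I expect the Bott--Chern Leray estimate to be the main obstacle: reconciling the relative Picard rank with $\rho(F)$ in the presence of singularities and of monodromy acting on $\mathrm{NS}(F)$ will likely require a relative version of the complexity inequality over $W$.

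Granting the displayed inequality, the proof concludes as follows. The Kähler class of $W$ gives a nonzero element of $H^{1,1}_{\rm BC}(W)$, so $h^{1,1}_{\rm BC}(W)\ge 1$ whenever $\dim W\ge 1$, and $\dim W+h^{1,1}_{\rm BC}(W)\ge 4$ once $\dim W\ge 3$. If $\dim X+h^{1,1}_{\rm BC}(X)-|B|<3$, then $\dim W\le 1$, so $W$ is a point (and $X$ is rationally connected) or an elliptic curve; in either case $W$ is projective, and a compact Kähler variety admitting a projective morphism with rationally connected fibres onto a projective base is Moishezon, hence projective. This proves (1). If $\dim X+h^{1,1}_{\rm BC}(X)-|B|=3$ and $X$ is non-projective, then $W$ is non-projective (otherwise $X$ would be projective, as above), so $\dim W\ge 2$, forcing $\dim W=2$ and $h^{1,1}_{\rm BC}(W)=1$. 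Now $W$ is a compact Kähler surface with klt singularities and $K_W\equiv 0$; its minimal resolution $\nu\colon\widetilde W\to W$ satisfies $K_{\widetilde W}=\nu^{*}K_W-D$ with $D\ge 0$ $\nu$--exceptional, and since $\widetilde W$ is not uniruled $K_{\widetilde W}$ is pseudoeffective, which forces $D=0$. Thus $\nu$ is crepant, $W$ has Du Val singularities, and $\widetilde W$ is a smooth compact Kähler surface with $K_{\widetilde W}\equiv 0$; having no $(-1)$-curves, $\widetilde W$ is minimal, so by the Enriques--Kodaira classification it is a K3, Enriques, bielliptic, or torus surface. Abelian and bielliptic surfaces contain no $(-2)$-curves, so in those cases $\nu$ is an isomorphism and $h^{1,1}_{\rm BC}(W)\in\{4,2\}$, a contradiction; Enriques surfaces and their Du Val contractions are projective, which is excluded. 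Hence $\widetilde W$ is a non-projective K3 surface, whose Néron--Severi group is negative definite of rank at most $19$; from $h^{1,1}_{\rm BC}(W)=20-e=1$, where $e$ denotes the number of $\nu$-exceptional curves, we get $e=19$, so these curves span $\mathrm{NS}(\widetilde W)_{\rr}$ and $\rho(W)=0$. Thus $W$ is a singular non-projective K3 surface, as asserted.
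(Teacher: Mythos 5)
The central step of your argument---the inequality $\dim X+h^{1,1}_{\rm BC}(X)-|B|\ge \dim W+h^{1,1}_{\rm BC}(W)$---rests on the Leray-type estimate $h^{1,1}_{\rm BC}(X)\ge h^{1,1}_{\rm BC}(W)+\rho(F)$, and this estimate is false: classes on the general fibre $F$ need not extend to $X$ (monodromy on ${\rm NS}(F)$ already obstructs this for, say, a family of del Pezzo surfaces with large monodromy, where $\rho(F)$ can be much larger than the relative Picard rank of $X/W$). Without it, the chain $\dim F+\rho(F)\ge|B_F|\ge|B|$ from \cite{BMSZ18} does not transfer to the bound you need on $h^{1,1}_{\rm BC}(X)-h^{1,1}_{\rm BC}(W)$. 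You flag this yourself as ``the main obstacle,'' but the obstacle is not a technicality to be reconciled---the inequality you propose to prove it with is simply not true, so a genuinely different mechanism is required.

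The paper's mechanism is the \emph{fine complexity}: one works with the span $\langle\Sigma\rangle$ of the ($\qq$-Cartier) components of $B$ inside $H^{1,1}_{\rm BC}(X)$ rather than with $\rho(F)$. The key dichotomy is: either $\dim_\rr\langle\Sigma\rangle\le h^{1,1}_{\rm BC}(X)-1$, in which case $c_{\rm BC}(X,B)\le 3$ forces $\overline{c}(X,B;\Sigma)\le 2$ and a separate projectivity criterion (Theorem~\ref{introthm:fine-BCC}, proved via the MRC fibration and the horizontality of $B$) applies; or $\langle\Sigma\rangle=H^{1,1}_{\rm BC}(X)$, which forces $H^{1,1}_{\rm BC}(X)={\rm Pic}(X)_\rr$ and hence projectivity directly. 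The restriction-to-fibres step is then carried out for the \emph{span} of the components, using only the left-exactness of $0\to\pi^*H^{1,1}_{\rm BC}(Z')\to H^{1,1}_{\rm BC}(Y')\to H^{1,1}_{\rm BC}(F)$, i.e.\ the estimate $\dim_\rr\langle\Sigma_F\rangle\le\dim_\rr\langle\Sigma\rangle-\dim_\rr\bigl(\langle\Sigma\rangle\cap\pi^*H^{1,1}_{\rm BC}(Z')\bigr)$, which is robust against monodromy. Your treatment of the base in case (2) (crepant minimal resolution, Enriques--Kodaira classification, exclusion of tori, bielliptic and Enriques surfaces, and the count $e=19$) is sound and in fact more detailed than the paper's, and your reduction of $K_W\equiv 0$ via the generalized canonical bundle formula matches the paper's eventual conclusion; but as written the proof does not close because its pivotal inequality is unproved and, in the form stated, unprovable.
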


\begin{remark}
\em{
In case (2) of Theorem \ref{introhm:BCC} we will find a bimeromorphic model $Y$
of $X$ such that the MRC fibration $Y \rightarrow W$ is a morphism.
On this bimeromorphic model, the fibers of the MRC fibration
are projective toric varieties (see Theorem~\ref{BCC-decomposition}).
Furthermore, the variety $W$ is a singular K3 surface
$W$ of Picard rank zero and $h^{1,1}_{\rm BC}(W)=1$.
}
\end{remark}

In Example~\ref{ex:1} and Example~\ref{ex:2}, we show that Theorem~\ref{introhm:BCC}.(2) already happens among non-projective K\"ahler surfaces with $B=0$.
These K\"ahler surfaces are constructed as singular models
of non-projective degenerations of extremal elliptic K3 surfaces. 
We use the classification due to Shimada and Zhang~\cite{SZ01} 
to construct these examples. 

In order to prove Theorem~\ref{introhm:BCC}, we will need to study a similarly defined invariant; the {\em fine complexity} in which $H^{1,1}_{\rm BC}(X)$ is replaced with the span of the components of $B$ in $N_{n-1}(X)_\rr$ (see Definition~\ref{def:decomp}). 
We show that whenever $X$ is a compact K\"ahler variety
and $-(K_X+B)$ is a nef divisor, the fine complexity $\overline{c}(X,B)$
gives an upper bound for the dimension of the base of the MRC fibration of $X$. More precisely, we prove the following: 

\begin{theorem}\label{introthm:base-of-MRC}
Let $X$ be a strongly $\qq$-factorial compact K\"ahler variety. 
Let $(X,B)$ be a log canonical pair with $-(K_X+B)$ nef. 
Then, the base $Z$ of the MRC fibration of $X$ has dimension at most $\overline{c}(X,B)$. 
\end{theorem}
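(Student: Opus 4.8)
The plan is to argue by induction on $\dim X$, using the K\"ahler MMP (\cite{HP15,HP16,DHY23}) to reduce to a Mori fibre space and then recursing on its base; it is cleanest to run the induction in the generality of generalized K\"ahler pairs $(X,B+M)$ with $M$ nef and $-(K_X+B+M)$ nef, since the moduli part produced by the canonical bundle formula has to be carried along, but I describe the argument in the stated setting. First I would dispose of the easy directions: if $B=0$ then $\overline c(X,B)=\dim X\ge\dim Z$ trivially. In general, since $-(K_X+B)$ is nef one can run a $(K_X+B)$-MMP; each step is a flip or a divisorial contraction, and a short computation — distinguishing whether the contracted divisor is a component of $B$, and using that the coefficients of $B$ are at most $1$ together with the change in Picard number — shows that $\overline c$ does not increase, while the base $Z$ of the MRC fibration is unchanged because the MMP is bimeromorphic. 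So I may assume the MMP has terminated: either (a) there is a Mori fibre space $f\colon X\to Y$ with $-(K_X+B)$ relatively ample and $\rho(X/Y)=1$, or (b) $K_X+B\equiv 0$.

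In case (a), the general fibre $F$ of $f$ is a Fano variety, hence rationally connected, so the MRC fibration of $X$ factors through $Y$ and $Z$ equals the base of the MRC fibration of $Y$. Writing $B=B^{\mathrm h}+B^{\mathrm v}$ for the horizontal/vertical decomposition over $Y$, restriction to a general fibre gives a log canonical pair $(F,B^{\mathrm h}|_F)$ with $-(K_F+B^{\mathrm h}|_F)$ nef and $F$ rationally connected. The key input is the fine-complexity estimate $\overline c\ge 0$ for rationally connected pairs, a refinement of \cite{BMSZ18}, which yields
\[
|B^{\mathrm h}|\ \le\ \dim F+\dim\langle\text{components of }B^{\mathrm h}|_F\rangle_{N^1(F)_\rr}\ \le\ \dim F+\dim\langle\text{components of }B^{\mathrm h}\rangle_{N_{n-1}(X)_\rr},
\]
the last step because restriction to a general fibre is surjective onto the span of classes. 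The vertical part descends, via the (generalized) canonical bundle formula applied after adjoining a general horizontal divisor making the generic fibre log Calabi--Yau, to a generalized pair $(Y,B_Y+M_Y)$ with $-(K_Y+B_Y+M_Y)$ nef; comparing spans and using $\rho(X)=\rho(Y)+1$ one checks $\overline c(Y,B_Y)\le\overline c(X,B)-\bigl(\dim F+\dim\langle\text{components of }B^{\mathrm h}|_F\rangle-|B^{\mathrm h}|\bigr)\le\overline c(X,B)$. The inductive hypothesis on $Y$ then gives $\dim Z=\dim(\text{MRC base of }Y)\le\overline c(Y,B_Y)\le\overline c(X,B)$.

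In case (b), if $X$ is not uniruled then $K_X$ is pseudo-effective, so $K_X\equiv-B$ forces $B=0$ and $Z=X$, whence $\overline c(X,0)=\dim X=\dim Z$. If $X$ is uniruled the MMP cannot be pushed to a Mori fibre space, and I would instead work directly with the MRC fibration $g\colon X\dashrightarrow Z$: a general fibre $F$ is rationally connected with $(K_F+B|_F)\equiv 0$, and $Z$ is not uniruled. The horizontal components of $B$ are controlled on $F$ as in case (a), while for the vertical components one shows that their classes in $N_{n-1}(X)_\rr$ are linearly independent modulo the span of the horizontal ones — this is where non-uniruledness of $Z$, and the vanishing of the discriminant on $Z$ forced by the canonical bundle formula, are used — giving $|B^{\mathrm v}|\le\dim\langle\text{all components}\rangle-\dim\langle\text{horizontal components}\rangle$. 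Adding the two estimates yields $|B|\le\dim F+\dim\langle\text{all components}\rangle_{N_{n-1}(X)_\rr}=\dim X-\dim Z+\dim\langle\text{all components}\rangle_{N_{n-1}(X)_\rr}$, i.e.\ $\dim Z\le\overline c(X,B)$.

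I expect the main obstacle to be the bookkeeping for the vertical components: one must show that $|B^{\mathrm v}|$, together with the ``jump'' between the span of the horizontal components on a general fibre and on $X$, is absorbed by the extra span available on $X$, and that this holds uniformly along the MMP and in the uniruled minimal-model case of (b). This is precisely where the generalized canonical bundle formula and a monotonicity lemma for $\overline c$ are needed, and it is the reason one is forced to work with generalized pairs throughout. A secondary, more routine, difficulty is verifying that the K\"ahler inputs used — the MMP and its termination, the canonical bundle formula, non-uniruledness of the base of the MRC fibration, pseudo-effectivity of $K_X$ for non-uniruled $X$, and the estimate $\overline c\ge 0$ on rationally connected fibres — are available in the required generality.
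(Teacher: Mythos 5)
Your proposal takes a genuinely different route from the paper -- an MMP-induction in the style of \cite{BMSZ18} rather than a direct reduction to the projective case -- but it contains a gap that is not ``secondary'' or ``routine'': the entire scheme rests on running and terminating an absolute $(K_X+B)$-MMP on a compact K\"ahler variety of arbitrary dimension, ending in a Mori fibre space or a minimal model. Cone and contraction theorems, existence of flips, and termination for the \emph{absolute} K\"ahler MMP are only available in low dimensions; the paper is structured precisely to avoid this, using only (i) dlt modifications (\cite[Theorem 1.6]{HP24}), (ii) the MRC fibration together with pseudo-effectivity of the canonical class of its non-uniruled base (\cite{Ou25}), and (iii) relative MMPs over a base where the morphism is already projective (\cite{Fuj22}), none of which require the absolute machinery. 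Promoting your ``secondary difficulty'' to the main obstacle: as written, case (a) cannot even get started in the stated generality.

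There is a second, more local, gap in your case (b). For the vertical part of $B$ you assert that the classes of the vertical components are linearly independent modulo the span of the horizontal ones, so that $|B^{\mathrm v}|$ is absorbed by the extra span; you give no argument, and the statement is not clear (two vertical components over the same point of $Z$ need not contribute independent classes). The paper proves the much cleaner statement that, after passing to a strongly $\qq$-factorial dlt modification, $B$ has \emph{no} vertical components at all: if $B_v\neq 0$, then on a resolution of the indeterminacy of the MRC map the divisor $F_{Y'}-B'_v$ pairs negatively with a power of a K\"ahler class, hence is not pseudo-effective, while \cite[Theorem 2.2]{HP24} (pseudo-effectivity of $K_{Y'/Z'}+B'_h$) combined with pseudo-effectivity of $K_{Z'}$ forces $F_{Y'}-B'_v=K_{Y'}+B'_h-p^*(K_Y+B_Y)$ to be pseudo-effective, a contradiction. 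With all of $B$ horizontal, the theorem follows immediately by restricting to a general (projective, rationally connected) fibre of the MRC and quoting $\overline c\geq 0$ from \cite[Theorem 1.2]{BMSZ18}; no induction, no canonical bundle formula, and no generalized pairs are needed. I would encourage you to replace the MMP induction by this restriction argument, or at minimum to supply the horizontality statement, which is the real content of the theorem.
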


In Theorem~\ref{introthm:base-of-MRC}, when we mention the MRC fibration of $X$, we mean the MRC fibration defined from any smooth bimeromorphic model of $X$.
In order to prove the previous theorem, we will use the MRC 
fibration in the K\"ahler setting~\cite{CH20}, the 
relative Moishezon property for the MRC fibration~\cite{CH24}, 
and the uniruledness criteria via non-pseudoeffectivity
of the canonical divisor proved recently by W. Ou~\cite{Ou25}. 
In particular, we conclude that whenever a compact K\"ahler pair $(X,B)$ has small fine complexity, then the base of its MRC fibration is likely to be a point.  
Using this statement, we show a projectivity criterion for compact K\"ahler varieties with small fine complexity.

\begin{theorem}\label{introthm:fine-BCC}
Let $X$ be a strongly $\qq$-factorial compact K\"ahler variety. 
Let $(X,B)$ be a log canonical pair 
with $-(K_X+B)$ nef. 
Then, the following statements hold: 
\begin{enumerate}
    \item we have $\overline{c}(X,B)\geq 0$, 
    \item if $\overline{c}(X,B)<1$, then $X$ is a projective toric variety, and  
    \item if $\overline{c}(X,B)<2$, then $X$ is a projective variety with $H^{1,1}_{\rm BC}(X)={\rm Pic}(X)_\rr$.
\end{enumerate}
\end{theorem}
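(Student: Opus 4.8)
The plan is to deduce all three statements from Theorem~\ref{introthm:base-of-MRC}. Write $\widetilde{X}$ for a smooth bimeromorphic model of $X$ and $Z$ for the base of the MRC fibration of $\widetilde{X}$; the cited theorem gives the inequality $\dim Z \le \overline{c}(X,B)$. Since $\dim Z \ge 0$, part (1) is immediate, so the content lies in (2) and (3), both of which proceed by bounding $\dim Z$ and then analysing the resulting geometry.

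For (2): if $\overline{c}(X,B)<1$ then $\dim Z<1$, so $Z$ is a point and $\widetilde{X}$ is rationally connected. Then $H^0(\widetilde{X},\Omega^p_{\widetilde{X}})=0$ for all $p\ge 1$; in particular $h^{2,0}(\widetilde{X})=0$, so $\widetilde{X}$ is projective by Kodaira's criterion and $X$ is Moishezon. Next I would invoke a projectivity criterion for strongly $\qq$-factorial Moishezon compact K\"ahler varieties (in the spirit of \cite{Nam02}) to conclude that $X$ itself is projective. At that point $(X,B)$ is a projective log canonical pair with $-(K_X+B)$ nef and fine complexity $<1$, and I would apply the characterization of toric varieties via the fine complexity \cite{BMSZ18}; to line up the two notions of fine complexity I would check that, for a $\qq$-factorial projective variety, the span of the components of $B$ inside $N_{n-1}(X)_\rr$ equals their span inside $N^1(X)_\rr$, so that $X$ comes out a projective toric variety.

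For (3): if $\overline{c}(X,B)<2$ then $\dim Z\le 1$. If $Z$ is a point we are in the situation of (2). If $\dim Z=1$, then $Z$ is a smooth compact complex curve, hence projective, and the MRC fibration $\widetilde{X}\to Z$ has rationally connected general fibre $F$; plugging $H^0(F,\Omega^1_F)=H^0(F,\Omega^2_F)=0$ into the relative cotangent sequence of $\widetilde{X}\to Z$ gives $H^0(\widetilde{X},\Omega^2_{\widetilde{X}})=0$. Either way $h^{2,0}(\widetilde{X})=0$, so $\widetilde{X}$ is projective by Kodaira's criterion, $X$ is Moishezon, and—by the same projectivity criterion as in (2)—$X$ is projective, with $h^{2,0}(X)=0$. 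The equality $H^{1,1}_{\rm BC}(X)=\Pic(X)_\rr$ then follows formally: $h^{2,0}(X)=0$ (hence also $h^{0,2}(X)=0$) forces $H^2(X,\cc)=H^{1,1}(X)$, the Lefschetz $(1,1)$-theorem makes every integral degree-$2$ class algebraic, and one combines this with the isomorphism $H^{1,1}_{\rm BC}(X)\cong H^{1,1}(X)$.

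I expect the main obstacle to be the two reductions through singular varieties. The first is passing from projectivity of the smooth model $\widetilde{X}$ to projectivity of $X$: this is exactly where the strong $\qq$-factoriality hypothesis should do real work, ensuring the singularities of $X$ are mild enough (e.g.\ rational) for a Moishezon-and-K\"ahler-implies-projective statement to apply. The second is the identification $H^{1,1}_{\rm BC}(X)=\Pic(X)_\rr$ in (3): for singular $X$ the clean Hodge-theoretic and Lefschetz-$(1,1)$ inputs used above must be replaced by their analogues for Bott-Chern cohomology of a possibly singular compact K\"ahler variety. A smaller point to nail down in (2) is that \cite{BMSZ18} indeed provides the toric characterization in the sharp ``fine complexity'' form, phrased with the span of the boundary components rather than the full Picard rank.
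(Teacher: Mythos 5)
Your reduction to Theorem~\ref{introthm:base-of-MRC} is the right starting point (and your observation that part (1) follows immediately from $0\le\dim Z\le\overline{c}(X,B)$ is even a little cleaner than the paper's route), but the step where you descend projectivity from a smooth model to $X$ has a genuine gap. You want to conclude that $X$ is Moishezon and then invoke a Namikawa-type criterion, hoping that strong $\qq$-factoriality makes the singularities of $X$ "mild enough (e.g.\ rational)". It does not: $(X,B)$ is only assumed log canonical, and log canonical singularities need not be rational (the paper itself points to the cone over an elliptic curve), while strong $\qq$-factoriality is a condition on reflexive rank-one sheaves and imposes no rationality. So the criterion of \cite{Nam02}, which the paper states precisely for Moishezon K\"ahler varieties \emph{with rational singularities}, cannot be applied to $X$ directly. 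The same problem infects your proof of $H^{1,1}_{\rm BC}(X)={\rm Pic}(X)_\rr$: on a singular lc variety there is no pure Hodge structure on $H^2$ and no Lefschetz $(1,1)$-theorem in the form you use, so ``$h^{2,0}(X)=0$ forces $H^2(X,\cc)=H^{1,1}(X)$'' is not justified.

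The paper avoids both problems by inserting a strongly $\qq$-factorial dlt modification $\pi\colon(Y,B_Y)\to(X,B)$ at the outset: $Y$ is of klt type, hence has rational singularities, and $\overline{c}(Y,B_Y)\le\overline{c}(X,B)$ by Lemma~\ref{lem:Q-fact-dlt}. Projectivity of $Y$ is then obtained from rational connectedness when $\dim Z=0$, and when $\dim Z=1$ from \cite[Theorem 1.2]{CH24}, which makes the MRC bimeromorphic to a projective morphism over the curve $Z$, so that $Y$ is Moishezon and Namikawa applies legitimately to $Y$ (rather than via your Hodge-theoretic $h^{2,0}=0$ computation on a smooth model, which is fine as far as it goes but lands you on the wrong space). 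The equality $H^{1,1}_{\rm BC}(Y)={\rm Pic}(Y)_\rr$ comes from \cite[Lemma 2.42]{DH20} applied to the fibration with rationally connected fibers, not from Lefschetz $(1,1)$; and it is this equality on $Y$, combined with the projective bimeromorphic morphism $Y\to X$ and the induced inclusion $H^{1,1}_{\rm BC}(X)\hookrightarrow H^{1,1}_{\rm BC}(Y)$, that yields projectivity of $X$ and $H^{1,1}_{\rm BC}(X)={\rm Pic}(X)_\rr$ without ever needing rational singularities on $X$. To repair your proposal, route everything through the dlt modification and replace the Namikawa/Lefschetz descent by this $H^{1,1}_{\rm BC}={\rm Pic}_\rr$ descent. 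The remaining points you flag (matching the two spans defining the fine complexity, and the precise form of \cite{BMSZ18}) are indeed only bookkeeping once projectivity is in hand, and the final appeal to \cite[Theorem 1.2]{BMSZ18} for parts (1) and (2) matches what the paper does.
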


Theorem~\ref{introthm:fine-BCC} implies that all the results for small values of the fine complexity~\cite{BMSZ18,ELY25,MY25} are still valid in the compact K\"ahler setting. 
In~\cite{ELY25}, the authors show that $X$ is Fano type provided that $c(X,B)<2$ and $X$ is projective. Note that this statement is not valid for the fine complexity as shown by considering elliptic curves. However, Theorem~\ref{introthm:fine-BCC}.(3) is still valid for compact complex K\"ahler varieties of fine complexity strictly less than two.

\subsection*{Acknowledgements}
The second and third authors would like to thank Joshua Enwright for discussions on the complexity near one. 
The second author thanks Brendan Hassett and Alexander Kusnetzov for discussions on K3 surfaces. 

\section{Preliminaries}

We work over the field of complex numbers $\mathbb{C}$.
In this article, a \emph{variety} is an irreducible and reduced
complex space.

\subsection{K\"ahler varieties} In this subsection, we recall the definition
of K\"ahler varieties and notions of singularities of pairs. 
For more details, we refer the reader to \cite{Dem12,HP16} and references therein.

An $\rr$-divisor $D$ is a finite sum $D = \sum a_iD_i$, 
with $D_i$ prime Weil divisors, and $a_i \in \rr$. 
If $X$ is a normal variety, we can define the 
\emph{canonical sheaf $\omega_X$} as 
\[
\omega_X := \left(\bigwedge^{\dim X} \Omega_X^1\right)^{**}.
\]
By abuse of notation, we will write $K_X$ and use the additive divisor notation, 
even though $K_X$ might not correspond to a Weil divisor on $X$.
A \emph{sub-pair} $(X,B)$ is the data of a normal analytic variety $X$
and a $\qq$-divisor $B$ on $X$ such that $K_X + B$ is $\qq$-Cartier.
If $B$ is effective, we say that $(X,B)$ is a \emph{pair}.
We define the singularities of $(X,B)$ as in~\cite{KM98}. For similar definitions for K\"ahler generalized pairs, see \cite{DHY23}.

\begin{definition}
{\em
Let $X$ be a compact normal variety. We say that $X$ is \emph{$\qq$-factorial} if for every prime Weil divisor $D$ there exists an integer $m \geq 1$ such that $mD$ is Cartier, and there exists an integer $k\geq 1$ such that $\omega_X^{[k]} := (\omega_X^{\otimes k})^{**}$ is a line bundle. We say that $X$ is \emph{strongly $\qq$-factorial} if for every reflexive sheaf of rank one $\mathcal{L}$ on $X$, there exists an integer $m\geq 1$ such that $\mathcal{L}^{[m]}$ is a line bundle.
}
\end{definition}

Being strongly $\qq$-factorial is preserved by the steps of the Minimal Model Program.

\begin{lemma}[c.f.~{\cite[Lemma 2.5]{DH20}}]\label{lem:strong-Q-fact}
    Let $X$ be a compact variety, and $(X,B)$ be a strongly $\qq$-factorial dlt pair. If $X \dashrightarrow X'$ is a $(K_X + B)$-divisorial contraction or flip, then $X'$ is strongly $\qq$-factorial.
\end{lemma}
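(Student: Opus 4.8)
The plan is to verify directly that $X'$ has the defining property of strong $\qq$-factoriality: every rank-one reflexive sheaf $\mathcal{L}'$ on $X'$ should admit an integer $N\geq 1$ with $(\mathcal{L}')^{[N]}$ a line bundle. The argument runs parallel to the classical proof that $\qq$-factoriality is preserved along the MMP (as in \cite[Lemma 2.5]{DH20}), upgraded in two respects: one works with arbitrary rank-one reflexive sheaves rather than prime divisors, and one tracks reflexivity carefully. The principle used throughout is that a rank-one reflexive sheaf on a normal variety is the pushforward of its restriction to any open subset whose complement has codimension $\geq 2$; consequently two such sheaves agreeing in codimension one are isomorphic, and they transport along the codimension-one isomorphism provided by $\phi\colon X\dashrightarrow X'$.

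First I would produce, from $\mathcal{L}'$, a line bundle on $X$. Let $\mathcal{L}$ be the transport of $\mathcal{L}'$ to $X$: for a flip this is $\mathcal{L}'$ read through the codimension-one identification, and for a divisorial contraction $f\colon X\to X'$ it is $(f^*\mathcal{L}')^{**}$. Strong $\qq$-factoriality of $X$ gives $m\geq 1$ with $M:=\mathcal{L}^{[m]}=\mathcal{O}_X(mD)$ a line bundle, for a Weil divisor $D$; write $\mathcal{L}'=\mathcal{O}_{X'}(D')$ with $D'$ the strict transform of $D$. Let $C$ generate the relevant extremal ray $\overline{NE}(X/X')$, resp. $\overline{NE}(X/Z)$. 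The strategy is now to correct $mD$ by a divisor that is already Cartier on $X$ and pairs nontrivially with $C$, making the correction numerically trivial over the target of the contraction, hence --- after passing to a multiple --- pulled back from it by relative base-point-freeness.

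\emph{Divisorial contraction $f\colon X\to X'$.} Here the correcting divisor is the exceptional divisor $E$; by the negativity lemma $E\cdot C<0$. After enlarging $m$ so that $mE$ is also Cartier, put $d:=(mD)\cdot C$ and $e:=(mE)\cdot C<0$, so $N:=e\,mD-d\,mE$ is Cartier with $N\cdot C=0$, and $N-(K_X+B)$ is $f$-ample because $(K_X+B)\cdot C<0$. Relative base-point-freeness gives $t\geq 1$ and a Cartier divisor $N'$ on $X'$ with $tN\sim f^*N'$; pushing cycles forward kills the $E$-term and yields $\mathcal{O}_{X'}(tem\,D')\cong\mathcal{O}_{X'}(N')$, a line bundle, with $tem\neq 0$. \emph{Flip $\phi\colon X\dashrightarrow X'$ over $Z$, with small contractions $f\colon X\to Z$, $f'\colon X'\to Z$.} Here the correcting divisor is $K_X+B$. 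Fix $r\geq 1$ with both $mD$ and $r(K_X+B)$ Cartier on $X$, set $d:=(mD)\cdot C$ and $e:=r(K_X+B)\cdot C<0$, and put $N:=e\,mD-d\,r(K_X+B)$, a Cartier divisor with $N\cdot C=0$ and $N-(K_X+B)$ $f$-ample. Relative base-point-freeness gives $t\geq 1$ and $N_Z$ Cartier on $Z$ with $tN\sim f^*N_Z$. Transporting $N$ to $X'$ through the codimension-one identification and comparing rank-one reflexive sheaves, one gets that $(\mathcal{L}')^{[tem]}\otimes\mathcal{O}_{X'}\big(-tdr(K_{X'}+B')\big)$, formed reflexively, is isomorphic to the line bundle $f'^*\mathcal{O}_Z(N_Z)$. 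Since $K_{X'}+B'$ is $\qq$-Cartier by the construction of the flip, raising this isomorphism to a reflexive power making $\mathcal{O}_{X'}(r(K_{X'}+B'))$ a line bundle shows $(\mathcal{L}')^{[N]}$ is a line bundle for a suitable $N\neq 0$.

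The routine parts are the intersection-number bookkeeping and the choice of the integers $m,r,t$. The step I expect to be the main obstacle --- and essentially the only place where the compact Kähler hypothesis really enters --- is the use of relative base-point-freeness (equivalently, relative semiampleness) for $(K_X+B)$-negative extremal contractions of strongly $\qq$-factorial compact Kähler varieties, which I would take from the existing literature on the Kähler MMP (e.g.\ \cite{HP16,DH20}). A secondary, purely analytic, point is justifying the manipulations with rank-one reflexive sheaves --- restriction to and extension across loci of codimension $\geq 2$, compatibility with strict transforms, and pushforward along $f$ --- which is precisely why the whole argument is best phrased in terms of reflexive sheaves rather than divisor classes.
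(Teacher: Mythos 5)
The paper does not actually prove this lemma: it is stated with a ``c.f.''\ citation to \cite[Lemma 2.5]{DH20} and no proof environment follows, so there is nothing in the source to compare you against line by line. Your argument is the standard MMP descent mechanism (as in \cite[Corollaries 3.17--3.18, Proposition 3.37]{KM98}), upgraded from Weil divisors to rank-one reflexive sheaves, and in substance it is correct: transport $\mathcal{L}'$ across the codimension-one identification (or take the reflexive pullback), use strong $\qq$-factoriality of $X$ to make a reflexive power invertible, correct by a Cartier multiple of $E$ (resp.\ of $K_X+B$) to kill the intersection with the contracted ray, and descend by relative base-point-freeness for the projective contraction morphism. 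This is surely close to what \cite{DH20} does.

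One point of notation deserves care, precisely because it is where ``strongly $\qq$-factorial'' differs from ``$\qq$-factorial''. On a non-projective compact K\"ahler variety a line bundle need not admit a nonzero meromorphic section, so writing $\mathcal{L}^{[m]}=\mathcal{O}_X(mD)$ for a Weil divisor $D$, and phrasing the descent as a linear equivalence $tN\sim f^*N'$, is not literally available; if it were, every rank-one reflexive sheaf would be divisorial and the lemma would reduce to the ordinary $\qq$-factorial statement. The fix is exactly the one your opening paragraph promises but your computation does not quite carry out: work multiplicatively with invertible sheaves throughout. Set $M:=\mathcal{L}^{[m]}$, $\mathcal{E}:=\mathcal{O}_X(mE)$, form $\mathcal{N}:=M^{\otimes e}\otimes\mathcal{E}^{\otimes(-d)}$ with $d=c_1(M)\cdot C$, $e=c_1(\mathcal{E})\cdot C<0$, obtain $\mathcal{N}^{\otimes t}\cong f^*\mathcal{N}'$ from relative base-point-freeness, and then recover $(\mathcal{L}')^{[tem]}\cong\mathcal{N}'$ by restricting to the common open subset with codimension-two complement (where $\mathcal{E}$ is trivial) and taking reflexive hulls, rather than by ``pushing cycles forward''. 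The flip case is handled the same way, with $\mathcal{O}_X(r(K_X+B))$ as the correcting line bundle and the final twist by $\mathcal{O}_{X'}(r'(K_{X'}+B'))$, which is invertible since $K_{X'}+B'$ is $\qq$-Cartier by the definition of the flip. With that adjustment your proof is complete; the reliance on relative base-point-freeness for projective bimeromorphic contractions in the K\"ahler MMP is legitimate and correctly sourced.
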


\begin{definition}
{\em
    A variety $X$ is \emph{K\"ahler} if there exists a positive closed real $(1,1)$-form $\omega$ such that the following holds:
    for every point $x\in X$ there exists an open subset $U \ni x$ 
    and an embedding $i_U \colon U \rightarrow V$ 
    into an open subset $V$ of $\cc^N$, and a strictly plurisubharmonic $C^\infty$-function $f\colon V \to \rr$
    such that 
    \[
        \omega|_{U\cap X^{\rm sm}} = (i\partial \overline{\partial} \,f)|_{U\cap X^{\rm sm}},
    \] 
    where $X^{\rm sm}$ is the smooth locus of $X$.
}
\end{definition}

The Minimal Model Program preserves the K\"ahler condition.
We use Remark~\ref{lem:mmp-kahler} below implicitly throughout the paper.

\begin{remark}
\label{lem:mmp-kahler}
{\em Let $f:X\to Y$ 
be a projective morphism of compact normal complex varieties. It is well known that if 
$Y$ is K\"ahler then $X$ is K\"ahler. 
In particular if $X\dasharrow X'$ is a flip or divisorial over $Y$ contraction then $X'$ is also K\"ahler. To see this, note that if $X\to Z$ is a flipping or divisorial contraction over $Y$, then $Z$ is projective over $Y$ and hence K\"ahler, and if $X\to Z$ is a flipping contraction over $Y$ and $X^+\to Z$ the corresponding flip, then $X^+\to Z$ is a projective morphism and so $X^+$ is K\"ahler.
}
\end{remark}

Let $X$ be a normal compact variety. Consider the
\emph{Bott-Chern coholomology} $H^{1,1}_{\rm BC}
(X)$ of real $(1,1)$-forms with local potentials (see \cite[Definition 3.1]{HP16}). The Bott-Chern cohomology $H^{1,1}_{\rm BC}(X)$ plays the role of 
$N^1(X)_\rr$ in projective geometry. In particular, 
if $X$ is a K\"ahler variety with rational singularities, then
$H^{1,1}_{\rm BC}(X) \subset H^2(X,\rr)$ (see \cite[Eq (3)]{HP16}), 
so we can define an intersection product for $H^{1,1}_{\rm BC}(X)$ via the cup product of
$H^2(X,\rr)$. For the definition of \emph{nef} and \emph{pseudoeffective} class, see~\cite[Definition 2.2(vi)]{HP16}.

\begin{definition}
{\em
    We say that a compact variety $X$ is 
    \emph{Moishezon} if the trancendence degree of its field of meromorphic function is the dimension of $X$.
}
\end{definition}

\subsection{Maximally rationally connected fibration}
In this subsection, we recall the notion
of maximally rationally connected fibration (MRC fibration). 

Let $X$ be a compact K\"ahler manifold, then the MRC of $X$ is an almost holomorphic fibration $X\dasharrow Z$ such that the general fiber is rationally connected and the dimension of $Z$ is
maximal among all the fibrations of this type, and the base of this fibration $Z$ is not uniruled \cite[Remark 6.10]{CH20}. Note that the MRC is only defined up to bimeromorphic equivalence.
In particular, we may assume that $Z$ is smooth and by \cite{Ou25}, the canonical divisor $K_Z$ is pseudo-effective.
If $X$ is a normal compact K\"ahler variety, then the MRC is defined as the MRC of any resolution of $X$.

Recall that by \cite{HM07} if $X$ has dlt singularities then the fibers of any resolution $X'\to X$ are rationally chain connected and $X$ is rationally connected if and only if it is rationally chain connected. This fails for log canonical singularities as shown by a cone over an elliptic curve. Suppose that $\nu :X'\to X$ is a resolution such that $f:X'\to Z$ is a morphism birational to the MRC where $Z$ is smooth, $z\in Z$ a very general point, $F'=f^{-1}(z)$, and $F=\nu (F')$. Since $z\in Z$ a very general point, there are no rational curves on $Z$ containing $z$. Since all the fibers of $F'\to F$ are rationally chain connected, they must be contracted by $f$. By the rigidity lemma it follows that $X\dasharrow Z$ is also an almost holomorphic fibration. 

It is well known that if $X$ is a smooth compact K\"ahler rationally connected manifold, then $H^2(\mathcal O _X)=H^0(\Omega ^2_X)=0$ and hence $X$ is projective. Similarly, if $(X,B)$ is a compact rationally connected K\"ahler dlt pair and $X'\to X$ is a resolution, then by what we have mentioned above $X'$ is rationally connected and hence projective. Since $X$ has rational singularities, then it follows from \cite{Nam02} that $X$ is also projective.
By \cite{CH24}, it is known that if $(X,B)$ is a compact  K\"ahler klt pair, then there exists a model $X'\to Z$ of the MRC that is a projective morphism. By what we have observed above, $X\dasharrow Z$ is an almost holomorphic fibration whose general fiber $F$ is projective. 

By the previous discussion, we have the following lemma.

\begin{lemma}\label{lem:MRC-klt-type}
Let $X$ be a compact complex K\"ahler variety with klt type singularities.
Then, the MRC fibration $X\dashrightarrow Z$ is an almost holomorphic fibration
whose general fiber $F$ is a projective variety.
\end{lemma}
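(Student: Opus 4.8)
The plan is to reduce to the case of a klt pair and then assemble four ingredients already cited above: the existence of the MRC fibration in the K\"ahler setting \cite{CH20}, the relative projectivity of a model of the MRC \cite{CH24}, the uniruledness criterion of \cite{Ou25}, and Namikawa's projectivity criterion \cite{Nam02}. Since $X$ is of klt type, I would first choose an effective $\qq$-divisor $\Delta$ with $(X,\Delta)$ klt; the MRC fibration of $X$ is independent of this choice, so it suffices to argue for the klt pair $(X,\Delta)$. Recall that $X\dashrightarrow Z$ is defined via any resolution, that $Z$ may be taken to be a smooth compact K\"ahler manifold, and that $Z$ is not uniruled, so that $K_Z$ is pseudoeffective by \cite{Ou25}; in particular, no rational curve passes through a very general point of $Z$.

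Next I would invoke \cite{CH24} for the klt K\"ahler pair $(X,\Delta)$ to obtain a bimeromorphic model $g\colon X'\to Z$ of the MRC fibration that is a \emph{projective} morphism; composing with a resolution of $X'$ we may moreover assume $X'$ smooth, hence compact K\"ahler by Remark~\ref{lem:mmp-kahler} since $g$ is projective and $Z$ is K\"ahler. The general fiber $F'$ of $g$ is then a smooth compact K\"ahler rationally connected variety, hence projective. To transport the fibration down to $X$ itself, I would run the rigidity argument of the discussion preceding the statement: resolving the graph of $X'\dashrightarrow X$ yields a smooth $W$ with morphisms $W\to X$, $W\to X'$, $W\to Z$; since $(X,\Delta)$ is klt, hence dlt, \cite{HM07} shows the fibers of $W\to X$ are rationally chain connected, and a very general point of $Z$ lies on no rational curve, so these fibers are contracted over a neighborhood of the general fiber of $W\to Z$. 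The rigidity lemma then produces the induced almost holomorphic fibration $X\dashrightarrow Z$, whose general fiber $F$ is the image in $X$ of the general fiber of $W\to Z$.

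Finally I would check that $F$ is projective. The general fiber of $W\to Z$ is smooth, compact K\"ahler and rationally connected, hence projective, and it is bimeromorphic to both $F'$ and $F$, so $F$ is Moishezon. Moreover $F$ is a closed subvariety of the K\"ahler variety $X$, hence K\"ahler, and $F$ has rational singularities, either because $(F,\Delta|_F)$ is klt by adjunction for the general fiber of $X\dashrightarrow Z$ or because $X$ has rational singularities and these pass to a general fiber of a morphism onto a smooth base. Namikawa's criterion \cite{Nam02} then gives that $F$ is projective. I expect the descent from the projective model $X'\to Z$ back to $X\dashrightarrow Z$ to be the main point: one must simultaneously verify that almost-holomorphicity is preserved --- which is exactly where the rigidity lemma together with the rational chain connectedness of \cite{HM07} is used --- and that the general fiber remains projective, which is where the rational singularities of $X$ and \cite{Nam02} are needed.
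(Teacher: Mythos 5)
Your proposal is correct and follows essentially the same route as the paper: the paper "proves" this lemma precisely by the discussion preceding it, namely the existence of the MRC fibration with non-uniruled smooth base \cite{CH20}, pseudoeffectivity of $K_Z$ via \cite{Ou25} (so no rational curves through a very general point of $Z$), rational chain connectedness of resolution fibers from \cite{HM07} combined with the rigidity lemma to get almost holomorphicity, the projective model of the MRC from \cite{CH24}, and Namikawa's criterion \cite{Nam02} to descend projectivity to the general fiber of $X\dashrightarrow Z$. The only cosmetic difference is that you obtain the Moishezon property of $F$ from the projective model of \cite{CH24}, whereas the paper deduces it from projectivity of a resolution of the rationally connected fiber; both are fine.
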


\subsection{Complexity} In this subsection, we recall the notion of complexity
of pairs and recall some lemmata. For more results regarding complexity
we refer the readers to~\cite[\S 2.4]{BMSZ18}.

\begin{definition}\label{def:decomp}
{\em 
Let $B$ be an effective divisor on a compact K\"ahler variety $X$.
A {\em decomposition} $\Sigma$ of the divisor $B$ is an expression of the form $\sum_{i=1}^{k}b_iB_i\leq B$ 
where each $B_i$ is a Weil effective divisor
and each $b_i$ is a positive real number. 
The {\em fine complexity} of $(X,B;\Sigma)$
with respect to the decomposition $\Sigma$ is defined to be 
\[
\overline{c}(X,B;\Sigma):=\dim X + \dim_\rr \langle \Sigma\rangle - \sum_{i=1}^k b_i 
\]
where $\langle \Sigma \rangle$ is the span of the $B_i$'s in the space of $\rr$-Weil divisors modulo numerical equivalence 
and $|B|:=\sum_{i=1}^k b_i$.
The {\em fine complexity} of $(X,B)$, denoted by $\overline{c}(X,B)$, is the minimum
among all the fine complexities of $(X,B;\Sigma)$ 
with respect to all possible decompositions $\Sigma$. 
} 
\end{definition}

\begin{definition}\label{def:BC-comp}
{\em 
Let $X$ be a compact K\"ahler variety and $B$ be an effective divisor on $X$.
A decomposition $\Sigma=\sum_{i=1}^k b_iB_i\leq B$ is said to be a {\em $\qq$-Cartier
decomposition} if each Weil divisor $B_i$ is $\qq$-Cartier.
The {\em Bott-Chern complexity} of $(X,B;\Sigma)$
with respect to a $\qq$-Cartier decomposition $\Sigma$ is defined to be 
\[
c_{\rm BC}(X,B;\Sigma):=\dim X +  h^{1,1}_{\rm BC}(X)-\sum_{i=1}^k b_i. 
\]
The {\em Bott-Chern complexity} of $(X,B)$ is defined to be the minimum among all
the Bott-Chern complexities $c_{\rm BC}(X,B;\Sigma)$ for all possible $\qq$-Cartier decompositions
$\Sigma$ of $B$. 
}
\end{definition}

The following lemmata are well-known in the algebraic setting, see, e.g.,~\cite[Lemma 3.32]{MS21}.
In the compact K\"ahler setting the proof is verbatim. 

\begin{lemma}\label{lem:Q-fact-dlt}
Let $X$ be a compact K\"ahler variety
and $(X,B)$ be a log pair. 
Let $\pi\colon (Y,B_Y)\rightarrow (X,B)$ be a strongly $\qq$-factorial dlt modification of $(X,B)$.
Then, we have $\overline{c}(Y,B_Y) \leq \overline{c}(X,B).$ 
\end{lemma}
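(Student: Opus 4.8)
The plan is to bound $\overline{c}(Y,B_Y)$ by comparing one well-chosen decomposition on $Y$ against a decomposition that computes $\overline{c}(X,B)$. Fix a decomposition $\Sigma=\sum_{i=1}^{k}b_iB_i\leq B$ with $\overline{c}(X,B;\Sigma)=\overline{c}(X,B)$ (if one does not wish to assume the minimum is attained, choose $\Sigma$ with $\overline{c}(X,B;\Sigma)<\overline{c}(X,B)+\varepsilon$ and let $\varepsilon\to 0$ at the end). Write $E_1,\dots,E_r$ for the $\pi$-exceptional prime divisors and $\widetilde{B}_i:=\pi^{-1}_{*}B_i$ for the strict transforms. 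Since $\pi$ is a dlt modification, $K_Y+B_Y=\pi^{*}(K_X+B)$ with $B_Y=\pi^{-1}_{*}B+\sum_{j=1}^{r}E_j$; as $\sum_i b_iB_i\leq B$ and $\pi^{-1}_{*}$ is additive and preserves effectivity, $\sum_i b_i\widetilde{B}_i\leq \pi^{-1}_{*}B$, so
\[
\Sigma_Y:=\sum_{i=1}^{k}b_i\widetilde{B}_i+\sum_{j=1}^{r}1\cdot E_j
\]
is a decomposition of $B_Y$ with $|\Sigma_Y|=|\Sigma|+r$.

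Next I would compare the spans $\langle\Sigma_Y\rangle\subseteq N_{n-1}(Y)_{\rr}$ and $\langle\Sigma\rangle\subseteq N_{n-1}(X)_{\rr}$ via the proper pushforward $\pi_{*}\colon N_{n-1}(Y)_{\rr}\to N_{n-1}(X)_{\rr}$, which descends to numerical classes by the projection formula and satisfies $\pi_{*}[\widetilde{B}_i]=[B_i]$ and $\pi_{*}[E_j]=0$. Hence $\pi_{*}$ carries $\langle\Sigma_Y\rangle$ onto $\langle\Sigma\rangle$, and rank--nullity gives
\[
\dim_{\rr}\langle\Sigma_Y\rangle=\dim_{\rr}\langle\Sigma\rangle+\dim_{\rr}\bigl(\langle\Sigma_Y\rangle\cap\ker\pi_{*}\bigr)\leq\dim_{\rr}\langle\Sigma\rangle+\dim_{\rr}\ker\pi_{*}.
\]
Here one invokes the structural fact that $\ker\pi_{*}$ is spanned over $\rr$ by $[E_1],\dots,[E_r]$, so $\dim_{\rr}\ker\pi_{*}\leq r$; this is the $N_{n-1}$-analogue — valid for a birational morphism of strongly $\qq$-factorial compact Kähler varieties — of the familiar projective statement $\rho(Y)=\rho(X)+r$, and is precisely what one transcribes from the algebraic proof of \cite[Lemma 3.32]{MS21}. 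When $X$ is itself $\qq$-factorial this is immediate: a numerical relation $\sum a_iB_i\equiv 0$ exhibits $\sum a_iB_i$ as a $\qq$-Cartier divisor numerically trivial on $X$, so its pullback $\sum a_i\widetilde{B}_i+(\text{exceptional})$ is numerically trivial on $Y$, forcing $\sum a_i\widetilde{B}_i$ into $\langle[E_1],\dots,[E_r]\rangle$.

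Combining the two steps and using $\dim Y=\dim X$,
\[
\overline{c}(Y,B_Y;\Sigma_Y)=\dim Y+\dim_{\rr}\langle\Sigma_Y\rangle-|\Sigma_Y|\leq\dim X+\dim_{\rr}\langle\Sigma\rangle+r-|\Sigma|-r=\overline{c}(X,B;\Sigma)=\overline{c}(X,B),
\]
and since $\overline{c}(Y,B_Y)\leq\overline{c}(Y,B_Y;\Sigma_Y)$ the lemma follows. The construction of $\Sigma_Y$, the crepant shape $B_Y=\pi^{-1}_{*}B+\sum_jE_j$ of the dlt modification, and the rank--nullity bookkeeping are all routine; the one point that genuinely has to be pinned down in the Kähler/Bott--Chern setting is the claim that $\ker\pi_{*}$ is controlled by the exceptional divisors (equivalently, the direct-sum decomposition of $N_{n-1}$ along $\pi$), which I would import from \cite{HP16}. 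That is where I expect the only real work to lie.
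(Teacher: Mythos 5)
Your proposal is correct and is essentially the argument the paper relies on: the paper gives no written proof of this lemma, only the remark that it is well known in the algebraic setting (citing \cite[Lemma 3.32]{MS21}) and that the proof is verbatim in the compact K\"ahler case, and your lift $\Sigma_Y=\sum_i b_i\widetilde{B}_i+\sum_j E_j$ together with the pushforward/rank--nullity bookkeeping is exactly that standard argument. The one point you rightly isolate --- that $\langle\Sigma_Y\rangle\cap\ker\pi_*$ is controlled by the exceptional classes, which ultimately rests on pulling back numerically trivial $\qq$-Cartier combinations of the $B_i$ (so on $\qq$-factoriality of $X$, which holds in every application the paper makes of the lemma) --- is indeed the only place where the K\"ahler setting requires a word, and it is the same structural fact the paper invokes later when it asserts $h^{1,1}_{\rm BC}(X')-h^{1,1}_{\rm BC}(X)=r$ in the proof of Theorem~\ref{BCC-decomposition}.
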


\begin{lemma}\label{lem:restriction-to-fibers}
Let $X$ be a compact K\"ahler variety 
and $(X,B)$ be a log pair.
Let $\phi \colon X \rightarrow Y$ be a fibration.
Assume that all the components of $B$ are horizontal over $Y$. Let $F$ be a general fiber and $B_F$ be the restriction of $B$ to $F$. Then, we have 
$\overline{c}(F,B_F)\leq \overline{c}(X,B)-\dim Z$.
\end{lemma}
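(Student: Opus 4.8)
To prove Lemma~\ref{lem:restriction-to-fibers} the plan is to pick an optimal decomposition on $X$, restrict it to a general fiber, and compare the two fine complexities term by term; since $\overline{c}(\cdot)$ is defined as a minimum, it suffices to exhibit one suitable decomposition on $F$. So I would fix a decomposition $\Sigma=\sum_{i=1}^{k}b_iB_i\le B$ with $\overline{c}(X,B)=\overline{c}(X,B;\Sigma)=\dim X+\dim_\rr\langle\Sigma\rangle-\sum_{i=1}^{k}b_i$. Because $\sum b_iB_i\le B$ with every $b_i>0$, each prime component of each $B_i$ is a component of $B$, hence is horizontal over $Y$ by hypothesis. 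Consequently, for a general fiber $F$ of $\phi$ (general meaning outside a countable union of proper analytic subsets of $Y$) the following hold: $F$ is a normal compact K\"ahler variety, the intersection $\operatorname{Sing}(X)\cap F$ has codimension $\ge 2$ in $F$, each $B_i|_F:=B_i\cap F$ is a nonzero effective Weil divisor on $F$, one has $(K_X+B)|_F=K_F+B_F$ by adjunction along a general fiber so that $(F,B_F)$ is again a log pair, and $\Sigma_F:=\sum_{i=1}^k b_i\,(B_i|_F)$ satisfies $\Sigma_F\le B_F$ (restriction to a general fiber is additive and order-preserving on effective divisors with horizontal support). Thus $\Sigma_F$ is a decomposition of $B_F$ with the same coefficients $b_i$.

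Next I would compare the linear spans. Restriction to a general fiber descends to a well-defined linear map $\langle\Sigma\rangle\to N_{\dim F-1}(F)_\rr$ on numerical classes of $\rr$-Weil divisors: since $F$ avoids $\operatorname{Sing}(X)$ in codimension $\le 1$, the relevant classes can be restricted, and by the projection formula a class numerically trivial on $X$ restricts to a class numerically trivial on $F$ (this is exactly where the argument is verbatim the algebraic one, cf.~\cite[Lemma 3.32]{MS21}). The image of this map is spanned by the classes $[B_i|_F]$, i.e. it equals $\langle\Sigma_F\rangle$, so $\dim_\rr\langle\Sigma_F\rangle\le\dim_\rr\langle\Sigma\rangle$. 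Using $\dim F=\dim X-\dim Y$, this gives
\[
\overline{c}(F,B_F)\ \le\ \dim F+\dim_\rr\langle\Sigma_F\rangle-\sum_{i=1}^k b_i\ \le\ (\dim X-\dim Y)+\dim_\rr\langle\Sigma\rangle-\sum_{i=1}^k b_i\ =\ \overline{c}(X,B)-\dim Y,
\]
which is the asserted bound (writing $Z=Y$ for the base of the fibration $\phi$).

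The only part of this that is not pure bookkeeping, and the step I would be most careful about, is the first one: justifying in the analytic category that a general fiber of $\phi$ is irreducible and normal, that it meets $\operatorname{Sing}(X)$ only in codimension $\ge 2$, that horizontal prime divisors cut out honest nonzero effective divisors on it with order preserved, that adjunction $(K_X+B)|_F=K_F+B_F$ holds along it, and that restriction is well defined on numerical classes of Weil divisors. These are the places where one invokes the analytic analogues of generic flatness and smoothness over $\cc$, of Bertini-type irreducibility and normality of general fibers, and of functoriality of cycle classes together with the projection formula, rather than citing the corresponding algebraic statements; once these are in place the remaining estimate is the displayed chain of inequalities above.
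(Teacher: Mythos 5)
Your proposal is correct and coincides with the paper's own treatment: the paper gives no independent argument for this lemma, stating only that the proof of the algebraic version \cite[Lemma 3.32]{MS21} carries over verbatim to the compact K\"ahler setting, and your restriction-of-an-optimal-decomposition argument is exactly that standard proof. You also rightly flag the two genuine points of care --- the analytic Bertini/adjunction statements for a general fiber and the well-definedness of restriction on numerical classes --- as well as the statement's typo conflating $Y$ and $Z$.
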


\section{The fine complexity}
In this section, we prove that the fine complexity of a compact complex K\"ahler Calabi--Yau pair $(X,B)$
is non-negative. Furthermore, if such value is less than two, then $X$ is a projective Fano type variety.
First, we bound the dimension of the base of the MRC fibration of a compact K\"ahler pair.

\begin{proof}[Proof of Theorem~\ref{introthm:base-of-MRC}]
Let $X$ be a compact complex log canonical K\"ahler variety of dimension $n$, and
let $(X,B)$ be a log canonical pair with $-(K_X+ B)$ nef.
Let $\pi\colon (Y,B_Y)\rightarrow (X,B)$ be a strongly $\qq$-factorial dlt modification (see, e.g.,~\cite[Theorem 1.6]{HP24}). 
By Lemma~\ref{lem:Q-fact-dlt}, we know that 
$\overline{c}(Y,B_Y)\leq \overline{c}(X,B)$. 
Let $\phi\colon Y\dashrightarrow Z$ be the MRC fibration of $Y$ (see, e.g.,~\cite[Remark 6.10]{CH20}). The compact complex K\"ahler variety
$Z$ is not uniruled and so $K_Z$ is pseudo-effective by~\cite{Ou25}. 

First, we argue that all the components of $B_Y$ are horizontal over $Z$. 
Let $p \colon Y'\rightarrow Y$ be a resolution of the indeterminacy of the MRC fibration $\phi$ so that $\phi':= \phi\circ p$ is a holomorphic map.
Write $p^*(K_Y+B_Y)=K_{Y'}+B_{Y'}+E_{Y'}-F_{Y'}$
to be the log pull-back of $(Y,B_Y)$ to $Y'$ 
where $B_{Y'}$ is the strict transform $B_Y$ in $Y'$
and $E_{Y'}$ and $F_{Y'}$ are $p$-exceptional effective divisors without common components.
As the MRC fibration is defined over a dense open subset, we may assume that the general fibers of $\phi$ and $\phi'$ are isomorphic and so both $E_{Y'}$ and $F_{Y'}$ are vertical over $Z'$.
Let $B_v$ be the sum of the vertical components of $B_Y$. 
Assume, by the sake of contradiction, that $B_v\neq 0$. 
Let $B'_v$ be the strict transform of $B_v$ in $Y'$. 
Denote by $B'_h$ the divisor $B_{Y'}+E_{Y'}-B'_v$. 
Then, the divisor $F_{Y'}-B'_v$ is not pseudo-effective. Indeed, if $\omega_{Y}$ is a K\"ahler form on $Y$ then $(F_{Y'}-B'_v)\cdot p^*\omega_Y^{n-1} =
-B_v \cdot \omega_Y^{n-1}<0$.
Let $F'$ be a general fiber of $\phi'$.
 Note that the restriction of
\[
K_{Y'}+B'_h - p^*(K_Y+B_Y) = F_{Y'}-B'_v 
\]
to $F'$ equals $F_{Y'}|_{F'} \geq 0$.
Therefore, by \cite[Theorem 2.2]{HP24}, we know that $K_{Y'/Z'}+B'_h$ is pseudo-effective.
Since $K_{Z'}$ is pseudo-effective, we conclude that 
$K_{Y'}+B'_h$ is pseudo-effective. 
Hence, $F_{Y'}-B'_v$ is pseudo-effective which is impossible. Therefore $B'_v=0$. 

Let $(F,B_F)$ be the pair obtained by restricting 
$(Y,B_Y)$ to the general fiber $F$ of $\phi\colon Y\dashrightarrow Z$. Then, $F$ is a compact K\"ahler variety with klt type singularities. Therefore, $F$ is K\"ahler, rationally connected, with rational singularities. Thus $F$ is projective. 
By Lemma~\ref{lem:restriction-to-fibers}, we conclude that 
\[
\overline{c}(F,B_F) \leq \overline{c}(Y,B_Y)-\dim Z \leq \overline{c}(X,B)-\dim Z. 
\]
By~\cite[Theorem 1.2]{BMSZ18}, we know that 
$\overline{c}(F,B_F)\geq 0$, so we conclude that 
$\dim Z \leq \overline{c}(X,B)$. 
\end{proof}

Now, we turn to prove the projectivity of compact K\"ahler pairs with small fine complexity.

\begin{proof}[Proof of Theorem~\ref{introthm:fine-BCC}]
Let $X$ be a strongly $\qq$-factorial compact complex K\"ahler variety of dimension $n$, 
and let $(X,B)$ be a log canonical pair with $-(K_X+B)$ nef. 
Assume that $\overline{c}(X,B)<2$.
Let $\pi\colon (Y,B_Y)\rightarrow (X,B)$ be a strongly $\qq$-factorial dlt modification (see, e.g.,~\cite[Theorem 1.6]{HP24}). 
By Lemma~\ref{lem:Q-fact-dlt}, we know that 
$\overline{c}(Y,B_Y)\leq \overline{c}(X,B)<2$. 
Let $\phi\colon Y\dashrightarrow Z$ be the MRC fibration of $Y$ (see, e.g.,~\cite[Remark 6.10]{CH20}). 
By Theorem~\ref{introthm:base-of-MRC}, we conclude that either $\dim Z =0$ or $\dim Z=1$. 
In the first case, $Y$ is a projective variety.
We argue that $Y$ is a projective variety in the second case as well.
Indeed, as $\dim Z=1$ and $K_Z$ is pseudo-effective, then $Z$ is a smooth curve of positive genus and in particular $\phi\colon Y \dashrightarrow Z$ is a morphism. 
By~\cite[Theorem 1.2]{CH24}, $\phi$ is bimeromorphic to a projective morphism $\phi'\colon Y'\rightarrow Z$. Therefore, $Y'$ is projective and so $Y$ is Moishezon, with rational singularities, and K\"ahler. Thus, $Y$ is projective as well.
As the fibers of $Y\rightarrow Z$ are rationally connected
and $H^{1,1}_{\rm BC}(Z)={\rm Pic}(Z)_\rr$, we conclude that 
$H^{1,1}_{\rm BC}(Y)={\rm Pic}(Y)_\rr$ holds as well (see \cite[Lemma 2.42]{DH20}). 
Thus, $X$ is a projective variety
and $H^{1,1}_{\rm BC}(X)={\rm Pic}(X)_\rr$ holds.

Finally, we observe that if $\overline{c}(X,B)<2$, then $X$ is a projective variety
and so~\cite[Theorem 1.2]{BMSZ18} applies to prove (1) and (2). 
\end{proof}

\section{The Bott-Chern complexity}

The following Theorem implies Theorem~\ref{introhm:BCC} by taking the decomposition given by the irreducible components of $B$.

\begin{theorem}\label{BCC-decomposition}
Let $X$ be a compact K\"ahler variety
and $(X,B)$ be a log canonical pair.
Assume that $X$ is strongly $\qq$-factorial and that $-(K_X+B)$ is a nef divisor. 
Then, the following statements hold. 
\begin{enumerate} 
\item If $c_{\rm BC}(X,B)<3$, then $X$ is projective. 
\item If $c_{\rm BC}(X,B)=3$ and $X$ is not projective, 
then there is a dlt modification $(X',B')\rightarrow (X,B)$, a small modification $Y'\dashrightarrow X'$ 
and an MRC fibration $Y'\rightarrow W$ for which the general 
fiber is a projective toric variety and the base $W$ is a singular
K3 surface of Picard rank zero and $h^{1,1}_{\rm BC}(W)=1$. 
\end{enumerate} 
\end{theorem}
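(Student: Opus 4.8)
The plan is to compare the Bott--Chern complexity with the fine complexity $\overline{c}(X,B)$ of Definition~\ref{def:decomp}, and then to combine Theorem~\ref{introthm:base-of-MRC}, Theorem~\ref{introthm:fine-BCC}, the K\"ahler canonical bundle formula \cite{HP24}, and the classification of compact K\"ahler surfaces. First I would record a basic inequality. Every $\qq$-Cartier decomposition $\Sigma$ of $B$ is in particular a decomposition, and for a $\qq$-Cartier prime divisor its numerical class is a quotient of its Bott--Chern class, so $\dim_\rr\langle\Sigma\rangle\le h^{1,1}_{\rm BC}(X)$; hence $\overline{c}(X,B)\le c_{\rm BC}(X,B)$. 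Since $X$ is strongly $\qq$-factorial every prime component of $B$ is $\qq$-Cartier, so taking $\Sigma=B$ gives
\[
c_{\rm BC}(X,B)=\dim X+h^{1,1}_{\rm BC}(X)-|B|,\qquad \overline{c}(X,B)\le \dim X+\dim_\rr\langle B\rangle-|B|.
\]
I would then replace $(X,B)$ by a strongly $\qq$-factorial dlt modification $\pi\colon(X',B')\to(X,B)$ (see \cite[Theorem~1.6]{HP24}): here $-(K_{X'}+B')=\pi^*(-(K_X+B))$ is nef, $\overline{c}(X',B')\le\overline{c}(X,B)$ by Lemma~\ref{lem:Q-fact-dlt}, $X'$ has rational singularities, projectivity of $X'$ implies projectivity of $X$ (since $X$ is then Moishezon and K\"ahler), and one checks the Bott--Chern complexity does not increase. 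Working on such a model, $N^1(X)_\rr$ embeds into $H^{1,1}_{\rm BC}(X)$, and if $X$ is not projective its K\"ahler cone is not contained in $N^1(X)_\rr$ (otherwise a rational K\"ahler class would exist), so $h^{1,1}_{\rm BC}(X)>\rho(X)\ge\dim_\rr\langle B\rangle$; combined with the displayed relations this yields $c_{\rm BC}(X,B)\ge \overline{c}(X,B)+1$ whenever $X$ is not projective. From this, part~(1) is immediate: if $c_{\rm BC}(X,B)<3$ and $X$ were not projective, then $\overline{c}(X,B)\le c_{\rm BC}(X,B)-1<2$, and Theorem~\ref{introthm:fine-BCC}.(3) forces $X$ to be projective, a contradiction.

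For part~(2) I would assume $c_{\rm BC}(X,B)=3$ and $X$ non-projective. The inequality above gives $\overline{c}(X,B)\le 2$, and $\overline{c}(X,B)<2$ would again make $X$ projective by Theorem~\ref{introthm:fine-BCC}.(3); so $\overline{c}(X,B)=2$, and squeezing the inequalities also yields $h^{1,1}_{\rm BC}(X)-\rho(X)=1$. On the strongly $\qq$-factorial dlt model $(X',B')$ let $X'\dashrightarrow Z$ be the MRC fibration, which is almost holomorphic with projective general fibre by Lemma~\ref{lem:MRC-klt-type}, with $Z$ smooth and $K_Z$ pseudo-effective by \cite{Ou25}. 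By Theorem~\ref{introthm:base-of-MRC}, $\dim Z\le\overline{c}(X',B')\le 2$; if $\dim Z\le 1$ then, exactly as in the proof of Theorem~\ref{introthm:fine-BCC} (using \cite{CH24}), $X'$, hence $X$, is projective, so $\dim Z=2$. As in the proof of Theorem~\ref{introthm:base-of-MRC} the components of $B'$ are horizontal over $Z$, so Lemma~\ref{lem:restriction-to-fibers} gives $\overline{c}(F,B_F)\le\overline{c}(X',B')-2=0$ for the restriction $(F,B_F)$ to a general fibre; by \cite[Theorem~1.2]{BMSZ18} this forces $F$ to be a projective toric variety and $B_F$ to be its toric boundary, in particular $K_F+B_F=0$.

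Next I would run the canonical bundle formula \cite{HP24} for the lc-trivial fibration $(X',B')\dashrightarrow Z$. Since the general fibre is toric with $K_F+B_F=0$, the moduli part vanishes, so (after resolving indeterminacy) $K_{X'}+B'\sim_\qq$ the pullback of $K_Z+\Delta_Z$ with $\Delta_Z\ge 0$; as $K_Z$ is pseudo-effective this pullback is pseudo-effective, while its negative is nef, so it is numerically trivial, whence $K_Z+\Delta_Z\equiv 0$. Intersecting with a K\"ahler class on $Z$ and using $\Delta_Z\ge 0$, $K_Z$ pseudo-effective, gives $\Delta_Z=0$ and $K_Z\equiv 0$, hence $K_Z\sim_\qq0$ and $K_{X'}+B'\sim_\qq 0$. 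Thus $Z$ is a minimal compact K\"ahler surface of Kodaira dimension zero which is non-projective (otherwise the MRC fibration of $X'$ would be a projective morphism to a projective base with rationally connected fibres, making $X'$ projective); by the classification of surfaces, $Z$ is a K3 surface or a complex $2$-torus. Because the MRC fibration $X'\dashrightarrow Z$ has rationally connected fibres, $H^{2,0}$ and the transcendental part $h^{1,1}_{\rm BC}(Z)-\rho(Z)$ coincide with those of $X'$, so $h^{1,1}_{\rm BC}(Z)=\rho(Z)+1$. If $Z$ were a $2$-torus then $h^{1,1}_{\rm BC}(Z)=4$, forcing $\rho(Z)=3$, which is impossible for a non-projective complex $2$-torus by a lattice-theoretic argument (a non-projective $2$-torus is simple, since a non-simple one is isogenous to a product of elliptic curves and hence projective, and a simple complex $2$-torus has Picard number $\neq 3$). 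Hence $Z$ is a non-projective K3 surface with $\rho(Z)=19$ and negative semidefinite N\'eron--Severi lattice. One then shows $\mathrm{NS}(Z)$ is negative definite and spanned by classes of $(-2)$-curves; contracting all of these produces a normal surface $W$ with Du Val singularities, $K_W=0$, $\rho(W)=\rho(Z)-19=0$ and $h^{1,1}_{\rm BC}(W)=h^{1,1}_{\rm BC}(Z)-19=1$, and after a suitable small modification $Y'\dashrightarrow X'$ the MRC fibration becomes a morphism $Y'\to W$ whose general fibre is the projective toric variety $F$.

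I expect the main obstacle to be precisely this last identification: proving that the non-projective K3 base $Z$ carries no elliptic fibration (so that $\mathrm{NS}(Z)$ is non-degenerate) and that its whole N\'eron--Severi lattice is the root lattice generated by its $(-2)$-curves, so that contracting them reaches Picard rank zero; and then propagating the MRC fibration through this contraction and through the small modification to obtain an honest morphism $Y'\to W$. The torus exclusion and the cases $\dim Z\le 1$ are comparatively routine, and the remaining ingredients---monotonicity of $\overline{c}$, Theorems~\ref{introthm:base-of-MRC} and~\ref{introthm:fine-BCC}, the toric characterisation of complexity zero \cite{BMSZ18}, the canonical bundle formula \cite{HP24}, the relative projectivity of the MRC fibration \cite{CH24}, and the uniruledness criterion \cite{Ou25}---enter as black boxes.
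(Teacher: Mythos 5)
Your part (1) and the reduction in part (2) to ``$\overline{c}(X,B)=2$, the MRC base $Z$ is a surface, and the general fibre $(F,B_F)$ has complexity zero'' are correct and essentially the paper's argument (the paper phrases the comparison as a dichotomy on $\overline{c}(X,B;\Sigma)$ rather than as your inequality $c_{\rm BC}\geq\overline{c}+1$ for non-projective $X$, but it is the same computation). The divergence begins where you invoke the canonical bundle formula: $(X',B')\dashrightarrow Z$ is only almost holomorphic, and after resolving indeterminacy the log pullback of $B'$ acquires negative coefficients on vertical divisors, so one does not yet have an lc-trivial fibration from a genuine pair. The paper spends most of its proof repairing this (introducing $G''$, the effective divisor $F$ supported on the non-lc-place exceptional divisors, and running two MMPs --- over $Z$ and over a good minimal model $Z'$ of $(Z,G_Z+M_Z)$ --- to reach $\pi_{Y'}\colon Y'\to Z'$ with $K_{Y'}+G_{Y'}+M_{Y'}\sim_\qq 0$ and $Y'\dashrightarrow X'$ small). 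Your parenthetical ``(after resolving indeterminacy)'' and the unconstructed ``suitable small modification'' hide exactly this work. Your claim that the moduli part vanishes because the fibres are toric is also unjustified, though not load-bearing, since your intersection-with-a-K\"ahler-class argument would kill $M_{Z}$ anyway.

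The genuine gap is the one you flag yourself: the passage from the K3 base to a model $W$ with $\rho(W)=0$ and $h^{1,1}_{\rm BC}(W)=1$. Your route needs (a) $h^{1,1}_{\rm BC}(Z)-\rho(Z)=h^{1,1}_{\rm BC}(X')-\rho(X')$ for the MRC fibration, asserted without proof and not among your quoted black boxes; and (b) that ${\rm NS}(Z)$ is negative definite and generated by classes of $(-2)$-curves. Neither half of (b) is automatic: a non-projective K3 of Picard number $19$ may carry an isotropic class (an elliptic fibration), and a negative definite even lattice of rank $19$ need not be a root lattice, so contracting all $(-2)$-curves need not reach Picard rank zero. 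The paper never analyses ${\rm NS}(Z)$ at all. Instead it derives $h^{1,1}_{\rm BC}(Z')=1$ and $\rho(Z')=0$ directly from the complexity count on the model $Y'\to Z'$: the subspace $\langle\Sigma_{Y'}\rangle$ has codimension one in $H^{1,1}_{\rm BC}(Y')$, and combining the sequence $0\to\pi_{Y'}^*H^{1,1}_{\rm BC}(Z')\to H^{1,1}_{\rm BC}(Y')\to H^{1,1}_{\rm BC}(F)$ with $\overline{c}(F,B_F;\Sigma_F)\geq 0$ forces $\langle\Sigma_{Y'}\rangle\cap\pi_{Y'}^*H^{1,1}_{\rm BC}(Z')=0$, hence $h^{1,1}_{\rm BC}(Z')=1$; and ${\rm Pic}(Z')\neq 0$ would give $H^{1,1}_{\rm BC}(Y')={\rm Pic}(Y')_\rr$ and projectivity. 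This propagation of the codimension-one condition ($c_{\rm BC}=3$ versus $\overline{c}=2$) through the fibration is the missing idea; without it, or a proof of (a) and (b), your final step does not go through.
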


\begin{proof}
First, assume that $c_{\rm BC}(X,B)<3$.
Let $\Sigma$ be a decomposition that computes 
the Bott-Chern complexity.
We consider the fine complexity of $(X,B;\Sigma)$. 
If $\overline{c}(X,B;\Sigma)<2$, then by Theorem~\ref{introthm:fine-BCC}
we conclude that $X$ is a projective variety. 
If $\overline{c}(X,B;\Sigma)\geq 2$, then we conclude that 
$\langle \Sigma\rangle = H^{1,1}_{\rm BC}(X)$. Thus, we have that 
$H^{1,1}_{\rm BC}(X)={\rm Pic}(X)_\rr$ and hence $X$ is a projective variety. 

Now, we assume that $c_{\rm BC}(X,B)=3$.
We argue that $\overline{c}(X,B)=2$ in this case.
The argument is similar to the previous paragraph. 
Let $\Sigma$ be the decomposition of $B$ that computes
the Bott-Chern complexity of $(X,B)$.
We consider the fine complexity of $(X,B;\Sigma)$. 
If $\overline{c}(X,B;\Sigma)<2$, then by Theorem~\ref{introthm:fine-BCC}
we conclude that $X$ is a projective variety. 
If $\overline{c}(X,B;\Sigma)>2$, then we conclude that
$\langle \Sigma \rangle = H^{1,1}_{\rm BC}(X)$. 
Thus, we have $H^{1,1}_{\rm BC}(X)={\rm Pic}(X)_\rr$ 
and hence $X$ is a projective variety. 
Thus, from now on, we may assume that $\overline{c}(X,B)=2$ holds. 

Now, we turn to argue that $\dim Z \leq 2$.
Let $(X',B')\rightarrow (X,B)$ be a strongly $\qq$-factorial dlt modification.
Note that $-(K_{X'}+B')$ is nef. 
As $X$ and $X'$ are strongly $\qq$-factorial
we know that $\rho(X'/X)$ equals $r$ the number of prime exceptional divisors of $X'$ over $X$. Indeed, the prime exceptional divisors $E_1,\dots, E_r$ are $\qq$-Cartier, 
if they were linearly dependent over $X$, then $\sum_{i=1}^r e_iE_i\equiv_X 0$ and by the negativity lemma we would get $\sum_{i=1}^r e_iE_i$ (see~\cite[Lemma 2.8]{DH}).
Furthermore, we know that $h^{1,1}_{\rm BC}(X')-h^{1,1}_{\rm BC}(X)$ equals $r$
the number of prime exceptional divisors of $X'$ over $X$. 
Let $\Sigma$ be a decomposition of $B$ that computes the Bott-Chern complexity of $(X,B)$. 
Write $\Sigma = \sum_{i=1}^k b_iB_i$ for the $\qq$-Cartier 
decomposition of $B$. 
Then, we have that 
$\Sigma'=\sum_{i=1}^k b_i\psi^{-1}_*B_i + \sum_{i=1}^r E_i \leq B'$
a $\qq$-Cartier decomposition of $B'$. Moreover the equalities
\[
c_{\rm BC}(X',B';\Sigma') = c_{\rm BC}(X,B;\Sigma)=3
\text{ and }
\overline{c}(X',B',\Sigma')=\overline{c}(X,B;\Sigma)=2
\]
hold. As $X'$ has klt type, we have a well-defined 
MRC fibration $\pi\colon X'\dashrightarrow Z$ which is a morphism
over a dense open subset of $Z$ (see Lemma~\ref{lem:MRC-klt-type}).
Proceeding as in the proof of Theorem~\ref{introthm:fine-BCC}, 
we observe that all the components of $B'$ are horizontal over $Z$.
Let $B_F$ be the restriction of $B'$ to $F$. 
Let $\Sigma_F$ be the restriction of $\Sigma'$ to the general fiber of 
$X'\dashrightarrow Z$.
Note that $F$ is a projective variety, 
as it is K\"ahler, has rational singularities, and 
it is rationally connected. 
Then, we have 
\[
\overline{c}(F,B_F;\Sigma_F) \leq \overline{c}(X',B';\Sigma')-\dim Z \leq 2-\dim Z. 
\]
As $F$ is projective and $-(K_F+B_F)$ is nef, 
by~\cite[Theorem 1.2]{BMSZ18} we know that $\overline{c}(F,B_F;\Sigma_F)\geq 0$ 
and so the dimension of $Z$ is at most two. 

If $Z$ is a point, then $X'$ is rationally connected with rational singularities.
Therefore, we have $H^{1,1}_{\rm BC}(X')={\rm Pic}(X')_\rr$ and $X$ is projective, leading to a contradiction.
If $Z$ is a curve, then $X'\dashrightarrow Z$ is indeed a morphism with rationally connected fibers. 
We conclude that $H^{1,1}_{\rm BC}(X')={\rm Pic}(X')_\rr$ (see \cite[Lemma 2.42]{DH20}).
So both $X'$ and $X$ are projective, leading to a contradiction. 

From now on, we assume that $\pi\colon X'\dashrightarrow Z$ is an MRC fibration 
to a compact complex K\"ahler surface and that the equalities 
\[
c_{\rm BC}(X',B';\Sigma')=3 \text{ and } 
\overline{c}(X',B';\Sigma')=2
\]
hold.
Let $\pi'\colon X''\rightarrow Z$ be a resolution of interdeminancy
of the bimeromorphic map $\pi$.
Let $f\colon X''\rightarrow X'$ be the associated projective bimeromorphic morphism. 
Let $E$ be the reduced exceptional divisor of $f$.
By possibly replacing $Z$ with a higher bimeromorphic model, we may assume that 
$(Z,\pi'(E))$ and $(X'',B''+E)$ are log smooth. 
The morphism $\pi'$ is a projective morphism
between compact complex K\"ahler manifolds. 
Write $f^*(K_{X'}+B')=K_{X''}+B''$.
Note that $M'':=-(K_{X''}+B'')$ is nef. 
However, $(X'',B'')$ may not be a pair.
Indeed, $B''$ may have some negative coefficients. 
Nevertheless, all the negative coefficients of $B''$
happen along prime components of $B''$ which are vertical over $Z$. 
The previous statement follows from the fact that
$X'\dashrightarrow Z$ is a morphism over an open subset of $Z$.
Therefore, we may apply the canonical bundle formula
for the generalized sub-pair $(X'',B''+M'')$ over $Z$ (see~\cite[Theorem 0.3]{HP24}).
We obtain a generalized surface sub-pair $(Z,B_{Z}+M_{Z})$
which is sub-log Calabi--Yau, i.e., it has generalized log canonical
singularities and $K_{Z}+B_{Z}+M_{Z}\equiv 0$. 
Again, the divisor $B_{Z}$ may have negative coefficients. 
We may assume that over every prime component $P$ of $B_{Z}^{=1}$
there is a prime component $Q$ of ${B''}^{=1}$ mapping onto $P$ (see~\cite[Theorem 2.3]{HP24}).
Let $G''$ be the boundary divisor obtained from $B''$
by increasing to one all the coefficients of the 
$f$-exceptional prime components of $B''$. 
Therefore $(X'',G''+M'')$ is a generalized dlt pair
and $K_{X''}+G''+M''\sim_\qq F\geq 0$ where $F$ is supported
on the union of $f$-exceptional prime divisors which are not
log canonical places of $(X'',B'',M'')$. Note that $F$ is vertical over $Z$. 
We run a $(K_{X''}+G''+M'')$-MMP over $Z$. 
This MMP terminates as $F$ is vertical over $Z$.
Indeed, up to $\rr$-linear equivalence over $Z$, the divisor
$F$ is of insufficient type over $Z$ so this MMP terminates by~\cite[Lemma 2.6]{Hua25} (see also~\cite[Lemma 2.9]{Lai11}). 
Let's call $g\colon X''\dasharrow Y$ the outcome of this MMP.
We denote by $B_Y$ (resp. $F_Y$, $M_Y$ and $G_Y$) 
the push-forward of $B''$ (resp. $F$, $M''$ and $G''$) to $Y$. 
Let $\pi_Y\colon Y \rightarrow Z$ be the induced projective morphism.
Thus, we have a commutative diagram
\[
\xymatrix{ 
X' \ar@{-->}[d]_-{\pi} & X'' \ar[d]_-{\pi'} \ar[l]_-{f}\ar@{-->}[r]^-{g} & Y \ar[ld]^-{\pi_Y} \\
Z & Z \ar[l]^-{{\rm Id}_Z} & 
}
\]
We conclude $K_Y+G_Y+M_Y\sim _{\qq, Z}0$ and hence that the effective divisor $F_Y$ is the pull-back of a divisor $F_{Z}$ from $Z$.
By construction, the effective divisor $F_{Z}$ does not contain any component of $B_{Z'}^{=1}$ in its support.
We apply the canonical bundle formula for $(Y,G_Y+M_Y)$ and obtain 
\[
\pi_Y^*(K_{Z}+G_{Z}+M_{Z})=K_{Y}+G_Y+M_Y,
\]
where $G_{Z}$ is an effective divisor. 
Note that the effective divisor $G_{Z}$ can be obtained from $B_{Z}$ by increasing to one
all the coefficients of its prime components with support in $\supp(F_{Z})$.
Furthermore, the divisor $K_Y+G_Y+M_Y$ is $\qq$-equivalent to an effective divisor
which is supported on $\supp(F_Y)$. 
We may run a $(K_{Z}+G_{Z}+M_{Z})$-MMP which terminates with a good minimal model 
$Z\rightarrow Z'$ (see~\cite[Section 2.5]{DHY23}). 
We run a $(K_Y+G_Y+M_Y)$-MMP over $Z'$. 
Note that $Y$ is a higher-dimensional compact complex K\"ahler variety, however,
the morphism $Y\rightarrow Z'$ is projective so this MMP can be performed
(see~\cite[Theorem 1.6]{Fuj22}). 
As $\Gamma_Y:=\pi_Y^*{\rm Ex}(Z\rightarrow Z')$ is exceptional over $Z'$, then this MMP must terminate
after contracting $\Gamma_Y$ (see~\cite[Lemma 2.6]{Hua25}).
We obtain a bimeromorphic contraction $Y\dashrightarrow Y'$
and a fibration $\pi_{Y'}\colon Y'\rightarrow Z'$. 
Henceforth, we have a commutative diagram 
\[
\xymatrix{ 
X' \ar@{-->}[d]_-{\pi} & X'' \ar[d]_-{\pi'} \ar[l]_-{f}\ar@{-->}[r]^-{g} & Y \ar[ld]^-{\pi_Y} \ar@{-->}[r]^-{h} & Y' \ar[d]^-{\pi_{Y'}} \\
Z & Z \ar[l] \ar[rr] & & Z'.
}
\]
As usual, we let $B_{Z'}$ (resp. $M_{Z'}, F_{Z'}$, and $G_{Z'}$) be the push-forward
of $B_{Z}$ (resp. $M_{Z},F_{Z}$, and $G_{Z}$) to the model $Z'$. 
Analogously, we let $B_{Y'}$ (resp. $M_{Y'},F_{Y'}$, and $G_{Y'}$)
be the push-forward of $B_Y$ (resp. $M_Y, F_Y$, and $G_Y$) 
and the following conditions are satisfied:
\begin{enumerate}
    \item the divisor $K_{Z'}+G_{Z'}+M_{Z'}$ is semiample, 
    \item the divisor $K_{Y'}+G_{Y'}+M_{Y'}$ is semiample, and
    \item the bimeromorphic map $Y'\dashrightarrow X'$ is a contraction.
\end{enumerate}
The divisor $K_{Y'}+G_{Y'}+M_{Y'}$, which is the pull-back
of $K_{Z'}+G_{Z'}+M_{Z'}$ via $\pi_{Y'}$, is $\qq$-linearly 
equivalent to an effective divisor which is exceptional over $X'$. 
Thus, we conclude that $K_{Y'}+G_{Y'}+M_{Y'}$ is $\qq$-linearly trivial
so $F_{Y'}=0$ and then $F_{Z'}=0$. 
This implies that $Y'\dashrightarrow X'$ only extracts log canonical
places of $(X',B')$. 
In particular, $(Y',B_{Y'}+M_{Y'})$ is a generalized log Calabi--Yau pair.
Furthermore, we conclude that $K_{Z'}+G_{Z'}+M_{Z'}\sim_\qq 0$.
As $F_{Z'}=0$ and $Z'$ is not uniruled, we get that $B_{Z'}=G_{Z'}\geq 0$.

As $Y'$ is the outcome of several MMP's, we conclude that $Y'$ is strongly $\qq$-factorial.
On the other hand, the variety $Y'$ has klt type singularities and so 
it has rational singularities. 
Let $\Sigma_{Y'}$ be the decomposition of $B_{Y'}$
obtained by taking the strict transform of $\Sigma'$ in $Y'$
and adding the reduced exceptional divisors of $Y'\dashrightarrow X'$.
Then, we have 
\[
c_{BC}(Y',B_{Y'};\Sigma_{Y'})=c_{\rm BC}(X',B';\Sigma')=3 
\text{ and }
\overline{c}(Y',B_{Y'};\Sigma_{Y'})=\overline{c}(X',B';\Sigma')=2.
\]
In particular, $\langle \Sigma_{Y'}\rangle$ is a subspace of $H^{1,1}_{\rm BC}(Y')$
of codimension one. 
Furthermore, we have a projective fibration $\pi_{Y'}\colon Y'\rightarrow Z'$.
Thus, we get a sequence 
\[
0 \rightarrow \pi_{Y'}^* H^{1,1}_{\rm BC}(Z') \rightarrow H^{1,1}_{\rm BC}(Y') 
\rightarrow H^{1,1}_{\rm BC}(F), 
\]
which is exact on the left and possibly not exact in the middle. 
Here, $F$ is the general fiber of $Y'\rightarrow Z'$
which turns out to be isomorphic to the general fiber of $X'\rightarrow Z$. 
As above, let $B_F$ be the restriction of $B_{Y'}$ to $F$
and $\Sigma_F$ be the restriction of $\Sigma_{Y'}$ to $F$. 
Note that
\[
\dim_\rr \langle \Sigma_F \rangle \leq  \dim_\rr  \langle \Sigma_{Y'}\rangle -
\dim_\rr  \left(\langle \Sigma_{Y'}\rangle \cap 
\pi_{Y'}^* H^{1,1}_{\rm BC}(Z')\right). 
\]
Thus, we conclude that 
\begin{equation}\label{eq3}
0\leq \overline{c}(F,B_F;\Sigma_F) 
 \leq \overline{c}(Y',B_{Y'};\Sigma_{Y'}) - \dim Z' - \dim_\rr  \left( \langle \Sigma_{Y'}\rangle \cap 
\pi_{Y'}^* H^{1,1}_{\rm BC}(Z')\right). 
\end{equation}
In particular, since $\overline{c}(Y',B_{Y'};\Sigma_{Y'}) = \dim Z'=2$, we have 
\[
\dim_\rr  \left( \langle \Sigma_{Y'}\rangle \cap 
\pi_{Y'}^* H^{1,1}_{\rm BC}(Z')\right)=0.
\]
It also follows that $h^{1,1}_{\rm BC}(Z')=1$ because if $h^{1,1}_{\rm BC}(Z')\geq 2$ then 
\[
\dim_\rr  \left( \langle \Sigma_{Y'}\rangle \cap 
\pi_{Y'}^* H^{1,1}_{\rm BC}(Z')\right) \geq 1
\]
which is impossible. 
If ${\rm Pic}(Z')\ne 0$, then
we conclude that $H^{1,1}_{\rm BC}(Y')={\rm Pic}(Y')_\rr$. 
This implies that $X$ is projective (see \cite[Lemma 2.42]{DH20}). 

Thus, we have that $h^{1,1}_{\rm BC}(Z')=1$ and $\rho(Z')=0$. 
In this case, we obtain $\overline{c}(F,B_F;\Sigma_F)=0$, so 
$F$ is a projective toric variety. 
As $Z'$ is not uniruled, we conclude that $B_{Z'}=M_{Z'}=0$
and that $Z'$ is canonical. Thus
$Z'$ is a canonical surface with $K_{Z'}\sim_\qq 0$.
Note that all the exceptional divisors of $Y'\dashrightarrow X'$
are log canonical places of $(X',B')$ which are vertical over $Z'$.
Since $Z'$ is a canonical surface, we conclude that 
$Y'\dashrightarrow X'$ is a small bimeromorphic modification. 
Thus, it suffices to take $W:=Z'$ to conclude the proof.
\end{proof}

\section{Examples}
In this section, we provide two examples of non-projective singular K3 surfaces of Picard rank zero and one-dimensional first Bott-Chern cohomology.

\begin{example}\label{ex:1}
{\em 
Let $f\colon X\rightarrow \pp^1$ be a projective K3 surface with an elliptic fibration. 
From~\cite[Theorem 1.1, Table 2, Example 1]{SZ01}, 
we may assume that $X$ has a section $C$ and six reducible fibers 
which are cycles of $4$ rational curves. 
For each such fiber, we may contract a chain of three rational curves
such that no chain intersects the section $C$.
We obtain a projective birational morphism $\phi\colon X\rightarrow X'$ 
where $X'$ has six $A_3$ singularities and a $(-2)$-curve $\phi(C)\subset X'$
which is disjoint from the singular points. 
We may blow-down $\phi(C)$ to obtain a projective singular K3 surface $X'\rightarrow X''$.
Note that $X''$ has Picard rank one and seven singular points $6A_3 + A_1$.
Let $\mathcal{L}\subset H^{1,1}(X)$ be the sublattice of rank 19 generated by the exceptional curves of $X\rightarrow X''$. 
Then, $\mathcal{L}$ is a negative-definite lattice of rank $19$.
We consider the Kuranishi deformation space $\mathcal{X}\rightarrow C$
consisting of deformations $X'$ of $X$ for which $\mathcal{L}\subset {\rm Pic}(X')$. 
As $\mathcal{L}$ has rank $19$, we conclude that $C$ is one-dimensional
and as $\mathcal{L}$ is negative-definite, we conclude that the general fiber
$\mathcal{X}_c$ of $\mathcal{X}\rightarrow C$ is non-projective. 
Since $\mathcal{L}\subseteq {\rm Pic}(\mathcal{X}_c)$, we deduce that we can contract
a configuration of $19$ $(-2)$-curves from $\mathcal{X}_c$ to obtain a projective bimeromorphic morphism $\mathcal{X}_c\rightarrow \mathcal{X}'_c$. 
We let $Z:=\mathcal{X}'_c$. 

The surface $Z$ is a singular K3 surface which is non-projective 
and satisfies $h^{1,1}_{\rm BC}(Z)=1$ and $\rho(Z)=0$. 
Therefore, we have $c_{\rm BC}(Z)=3$. 
}
\end{example}

\begin{example}\label{ex:2}
{\em 
Let $f\colon X \rightarrow \pp^1$ be a projective K3 surface with an elliptic fibration. 
From~\cite[Theorem 1.1, Table 2, Example 54]{SZ01}, we may assume that $X$ has a section $C$ and two reducible fibers which consist of cycles of $10$ rational curves. 
Furthermore, there is a section $C$ of $X$.
For each of such reducible fibers, we may contract $9$ of the rational curves and obtain
a projective birational contraction $X\rightarrow X'$ such that $X'$ has two $A_9$ singularities. Furthermore, the surface $X'$ has a $(-2)$-curve which is the image of the section of $X\rightarrow \pp^1$. 
We may contract the $(-2)$-curve $X'\rightarrow X''$ to obtain a singular projective
surface of Picard rank one with three singular points $2A_9+A_1$. 
Let $\mathcal{H}\subset H^{1,1}(X)$ be the sublattice of the Picard lattice generated
by the exceptional curves of $X\rightarrow X''$.
Let $\mathcal{X}\rightarrow C$ be the one-dimensional Kuranishi deformation space 
consisting of deformations $X'$ of $X$ for which $\mathcal{H}\subset {\rm Pic}(X')$. 
As $\mathcal{H}$ has rank 19 the family $\mathcal{X}\rightarrow C$ is one-dimensional
and as $\mathcal{H}$ is negative-definite, the general element $\mathcal{X}_c$
of this family is non-projective. 
Since $\mathcal{H}\subset {\rm Pic}(\mathcal{X}_c)$, 
we deduce that we can contract a configuration of $19$ 
$(-2)$-curves from $\mathcal{X}_c$ to obtain a singular surface 
$\mathcal{X}_c\rightarrow \mathcal{X}_c'$. We set $Z:=\mathcal{X}_c'$. 

The surface $Z$ is a singular K3 surface which is non-projective and satisfies 
$h^{1,1}_{\rm BC}(Z)=1$ and $\rho(Z)=0$. Therefore, we have $c_{\rm BC}(Z)=3$. 
} 
\end{example}

\bibliographystyle{habbvr}
\bibliography{bib}
\end{document}